\documentclass{amsart}
\usepackage{amsrefs}
\usepackage[T1]{fontenc}
\usepackage{graphicx} % Required for inserting images
\usepackage{amsfonts, amsmath, amssymb}
\usepackage[english]{babel}
\usepackage{framed}
\usepackage{array}
\numberwithin{equation}{section}
\newtheorem{theorem}{Theorem}
\newtheorem{lemma}{Lemma}
\newtheorem{conj}{Conjecture}

\newcommand{\Z}{\mathbb Z}
\newcommand{\N}{\mathbb N}
\newcommand{\R}{\mathbb R}
\newcommand{\C}{\mathbb C}

\title{On series involving $\binom{4k}{k}^{-1}$}
\author{Roman Le Lan}
\email{roman.lelan1@gmail.com}
\begin{document}
\begin{abstract}
In this article, we prove two conjectures of Sun concerning the evaluation of two series involving the reciprocal of the binomial coefficient $\binom{4k}{k}$. The values of these series are related to $\pi$, $\log3$, and the Gieseking constant $\text{Cl}_2(\pi/3)$. Our proofs rely on the Beta function and the Clausen function.
\end{abstract}
\maketitle

\section{Introduction}
Recently, Sun published a paper \cite{series2} in which he mainly considers series of the form
\[\sum_{k=1}^\infty \frac{ak^2+bk+c}{k(3k-1)(3k-2)m^k\binom{4k}{k}}\]
where $a, b, c \in \Z$ and $m$ is a nonzero rational number.
Many evaluations of such series have been established. For example, Au \cite{Au} showed that
\begin{equation}
\sum_{k=1}^\infty \frac{(35k-29k+6)3^k}{k(3k-1)(3k-2)\binom{4k}{k}}=\pi\sqrt3,
\label{Au}
\end{equation}
and Sun \cite{series2} proved that
\begin{equation}
\sum_{k=1}^\infty \frac{415k^3-343k+62}{k(3k-1)(3k-2)(-8)^k\binom{4k}{k}}=-3\log2.
\label{Sun}   
\end{equation}
Motivated by these results, we use new methods to determine the values of some of these series.
\begin{theorem}
    We have the identity
    \[\sum_{k=1}^\infty  \frac{(13k^2-15k+2)9^{k-1}}{k(3k-1)(3k-2)\binom{4k}{k}}=\frac{\pi\sqrt3}{3}.\]
\end{theorem}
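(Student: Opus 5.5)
We use the Beta function to express $\binom{4k}{k}^{-1}$ as an integral, sum the resulting geometric-type series inside the integral, and evaluate the remaining integral in closed form; the Clausen function should drop out for this particular series, leaving only $\pi\sqrt3/3$.

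\textbf{Step 1: partial fractions.} Since
\[\frac{1}{\binom{4k}{k}}=\frac{k!\,(3k)!}{(4k)!}=(3k+1)\,B(k+1,3k+1)=(3k+1)\int_0^1 x^k(1-x)^{3k}\,dx,\]
the factor $1/\bigl(k(3k-1)(3k-2)\bigr)$ is awkward next to the $(3k+1)$. So we first decompose
\[\frac{13k^2-15k+2}{k(3k-1)(3k-2)}=\frac{A}{k}+\frac{B}{3k-1}+\frac{C}{3k-2}.\]
Evaluating at $k=0$ gives $A=1$. At $k=1/3$ we get $B=\frac{13/9-5+2}{(1/3)(-1)}=\frac{14}{3}$. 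At $k=2/3$ we get $C=\frac{52/9-10+2}{(2/3)(1)}=-\frac{10}{3}$. Then $A+\frac{B}{3}+\frac{C}{3}=1+\frac{4}{9}=\frac{13}{9}$, which matches the leading coefficient $13/9$ as a check.

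\textbf{Step 2: a combined integral representation.} Each piece $\frac{1}{(3k-j)\binom{4k}{k}}$ should be written through a Beta integral with shifted parameters. For example,
\[B(k,3k-1)=\frac{(k-1)!\,(3k-2)!}{(4k-2)!},\]
and this equals $\frac{1}{\binom{4k}{k}}$ times a rational function of $k$ that contains factors such as $\frac{4k(4k-1)}{k(3k)(3k-1)}$. The aim is to find a linear combination of integrals
\[\int_0^1 x^{k-1}(1-x)^{3k-a}\,p(x)\,dx\]
that reproduces the summand exactly; this is essentially a creative-telescoping or Gosper-type matching. Interchanging sum and integral is justified by convergence: $\max_x 9x(1-x)^3=\frac{243}{256}<1$. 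Summing the geometric series $\sum_k \bigl(9x(1-x)^3\bigr)^k$ then gives
\[\int_0^1 \frac{R(x)}{1-9x(1-x)^3}\,dx\]
for an explicit rational function $R$.

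\textbf{Step 3: evaluation.} We factor $1-9x(1-x)^3$. Substituting $x=1-t$ gives $1-9t^3+9t^4$. The value $\pi\sqrt3$ suggests that this quartic has a quadratic factor of the form $3t^2\pm3t+1$ or similar, with discriminant $-3$. A partial-fraction decomposition then yields arctangent terms giving multiples of $\pi/\sqrt3$, together with logarithmic terms. For this series the logarithmic and Clausen-type contributions must cancel. A fallback, if the direct matching is messy, is the substitution $x=\sin^2$ or a $t\mapsto$ Möbius change of variables that turns the integrals into $\int\log(2\sin\theta)\,d\theta$, that is, into Clausen values $\mathrm{Cl}_2(\pi/3)$, which are then shown to cancel.

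\textbf{Main obstacle.} The hardest step is Step 2: choosing the weights $p(x)$ so that the integral identity is exact and the resulting quartic denominator factors nicely. I would first try to guess $R$ by imposing that the moments $\int_0^1 x^{k-1}(1-x)^{3k-2}p(x)\,dx$ match the summand for small $k$, and then verify the identity in general by integrating by parts.
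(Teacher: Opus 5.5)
\textbf{There is a genuine gap: the evaluation of the integral (your Step 3) rests on a false factorization.} Your overall framework matches the paper's: Beta integrals, then a geometric series summed under the integral, justified by $\max 9x(1-x)^3=243/256$. But both substantive steps are left open.

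\textbf{Step 2 is not carried out, but it can be repaired.} You never produce the exact integral representation. Your partial fractions $\frac1k+\frac{14/3}{3k-1}-\frac{10/3}{3k-2}$ do not each reduce to a Beta value; the $(3k-2)$ piece picks up a non-constant factor, since $\frac{1}{(3k-2)\binom{4k}{k}}=\frac{3(3k-1)}{4(4k-1)}B(k+1,3k-2)$. No Gosper-type guessing is needed to fix this:
\begin{itemize}
\item Combine the last two fractions into $\frac{4k-6}{(3k-1)(3k-2)}$ and write $4k-6=-4(k+1)+2(4k-1)$. This is the paper's Lemma 1 basis $(3X-1)(3X-2)$, $X(X+1)$, $X(4X-1)$.
\item Then use $\frac{1}{k\binom{4k}{k}}=\frac34B(3k,k)$, $\frac{k+1}{(3k-1)(3k-2)\binom{4k}{k}}=\frac34B(3k-2,k+2)$ and $\frac{4k-1}{(3k-1)(3k-2)\binom{4k}{k}}=\frac34B(3k-2,k+1)$.
\item In your variable this gives $\frac43S=\int_0^1\frac{1-3x^2}{1-9x(1-x)^3}\,dx$.
\end{itemize}

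\textbf{Step 3 is the real gap.} The factorization you hope for does not exist.
\begin{itemize}
\item $9t^4-9t^3+1$ is irreducible over $\mathbb{Q}$. It has no rational roots. In $(at^2+bt+c)(a't^2+b't+c')$ with $cc'=1$, the $t$-coefficient forces $b'=-b$, and then the $t^3$-coefficient $b(a'-a)=-9$ has no integer solution with $aa'=9$.
\item So there is no rational partial-fraction decomposition with a discriminant $-3$ factor. Any real factorization has irrational coefficients.
\item Your claim that the logarithmic terms ``must cancel'' is exactly what needs proof; asserting it assumes the answer.
\item There are no Clausen terms at all, because the integrand is rational. The log-sine fallback is relevant to Theorem 2, not this one.
\end{itemize}

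\textbf{The missing idea is that the integrand is an exact derivative of the arctangent of a rational function.}
\begin{itemize}
\item The paper writes $9t^4-9t^3+1=u^2+v^2$ with $u=3t^2-\tfrac32t-\tfrac12$ and $v=\tfrac{\sqrt3}{2}(t-1)$. For these, $u'v-uv'=\tfrac{\sqrt3}{2}(3t^2-6t+2)$.
\item Hence $\frac43S=-\frac{2\sqrt3}{3}\bigl[\arctan(u/v)\bigr]_0^{1^-}$, with boundary values $-\pi/2$ and $\pi/6$.
\item In your variable this is the reciprocal substitution $w=3x+1/x$. One has $1-9x(1-x)^3=x^2(w^2-9w+21)$ and $(1-3x^2)\,dx=-x^2\,dw$, so the integrand becomes $-dw/(w^2-9w+21)$. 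Your discriminant $-3$ quadratic lives in $w$, not in $t$.
\item Evaluating $-\frac{2}{\sqrt3}\arctan\frac{2w-9}{\sqrt3}$ between $x\to0^+$ and $x=1$ gives $\frac{4\pi}{3\sqrt3}$, hence $S=\frac{\pi\sqrt3}{3}$.
\end{itemize}
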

This first theorem proves Sun's conjecture \cite{series2}*{(4.20)}. 

Let $H_n:=\sum_{k=1}^n 1/k \quad (n\in\N)$ be the $n$-th harmonic number. Define
\[K:=L(2,\chi_{-3})=\sum_{n=0}^\infty\left(\frac{1}{(3n+1)^2}-\frac{1}{(3n+2)^2}\right)=\frac{4\sqrt3}{9}\text{Cl}_2(\pi/3).\]
Here, $\text{Cl}_2$ denotes the Clausen function defined on $\C$ by:
\[\text{Cl}_2(z)=\sum_{k=1}^\infty\frac{\sin(kz)}{k^2}.\]
Calegari, Dimitrov, and Tang \cite{cdt} proved that the constant $K$ is irrational. 

The second theorem of this article is as follows:
\begin{theorem}
    Let $P(X)=95X^2-84X+16$. Then,
    \[\sum_{k=1}^\infty  \frac{P(k)(H_{4k-1}-H_{k-1})-25k+12}{k(3k-1)(3k-2)\binom{4k}{k}}\left(\frac98\right)^{k-1}=\frac{\pi\sqrt3}{3}\log{3}+\frac{15}{4}K.\]
\end{theorem}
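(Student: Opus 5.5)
The plan is to treat the harmonic sum as a derivative with respect to a parameter of a Beta-integral representation of $\binom{4k}{k}^{-1}$. The starting point is
\[\frac{1}{\binom{4k}{k}}=(4k+1)\,B(k+1,3k+1)=(4k+1)\int_0^1 x^{k}(1-x)^{3k}\,dx.\]
The natural deformation is to replace $\binom{4k}{k}^{-1}$ by
\[\frac{\Gamma(k+1)\Gamma(3k+1+s)}{\Gamma(4k+1+s)},\]
or equivalently to insert a factor $(1-x)^{s}$ into the integral. Differentiating at $s=0$ gives the factor
\[\psi(3k+1)-\psi(4k+1)=H_{3k}-H_{4k}.\]
This differs from the target $H_{4k-1}-H_{k-1}$, so the parametrisation has to be chosen carefully. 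A cleaner choice is
\[\frac{\Gamma(k+s)\Gamma(3k+1)}{\Gamma(4k+s)},\]
i.e. inserting $x^{s}$ into $\int_0^1 x^{k-1}(1-x)^{3k}\,dx$. Differentiating at $s=0$ then produces $\psi(k)-\psi(4k)=-(H_{4k-1}-H_{k-1})$, which is exactly the combination in the theorem.

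Define
\[F(s)=\sum_{k\ge1}\frac{R(k)\,\Gamma(k+s)\Gamma(3k+1)}{\Gamma(4k+s)}\left(\frac98\right)^{k-1},\qquad R(k)=\frac{P(k)}{k(3k-1)(3k-2)}\cdot\frac{\Gamma(4k)}{\Gamma(k)\Gamma(3k+1)}\binom{4k}{k}^{-1}.\]
Here $R(k)$ is a rational function of $k$. Then $-F'(0)$ gives the $P(k)(H_{4k-1}-H_{k-1})$ part of the sum, and the term $-25k+12$ is an ordinary series of the same shape as Theorem 1. I would write
\[F(s)=\int_0^1 \sum_k R(k)\,x^{k-1+s}(1-x)^{3k}(9/8)^{k-1}\,dx.\]
After a partial fraction decomposition of $R(k)$ in $k$, with poles at $k=0,1/3,2/3$, each piece is a geometric-type series in
\[t=\tfrac98\,x(1-x)^3,\]
possibly divided by $3k-1$ or $3k-2$. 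These pieces sum to elementary functions of $x$, involving logarithms or arctangents of $t^{1/3}$. The key point is that on $[0,1]$ the maximum of $x(1-x)^3$ is $27/256$, so $t\le 243/2048<1$ and everything converges.

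At $s=0$ the integrand is a rational or logarithmic function in $x$. The differentiated version carries an extra factor $\log x$. The task is therefore to evaluate integrals of the form
\[\int_0^1 \log x\cdot G(x)\,dx,\]
where $G$ comes from $\sum_k t^k/(3k-j)$. The standard trick for the factors $1/(3k-1)$ and $1/(3k-2)$ is to replace them by $\int_0^1 y^{3k-j-1}\,dy$, making the sum in $k$ genuinely geometric. This gives a double integral of a rational function, with a logarithm, over the square. The denominator $1-\tfrac98 x(1-x)^3y^3$ should factor over $\mathbb{Q}(\sqrt{-3})$ after a substitution such as $y=u/(1-x)$ or $x=u/(1+u)$. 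The appearance of $\sqrt3\pi$ and $L(2,\chi_{-3})$ strongly suggests that cube roots of unity appear after the substitution.

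The $\log x$-weighted integrals of rational functions with such denominators evaluate to dilogarithms at sixth roots of unity. Their imaginary parts are $\mathrm{Cl}_2(\pi/3)$, which yields $K$, and their real parts give $\pi^2$-type terms. Products of $\pi/\sqrt3$ with $\log 3$ arise from $\log$ evaluated at points like $1/\sqrt3$ or $3$. I would assemble the result using the definition of $K$ and the identity $\mathrm{Cl}_2(2\pi/3)=\tfrac23\mathrm{Cl}_2(\pi/3)$. The remaining polynomial term $-25k+12$ is handled by the same integral at $s=0$ without differentiation, giving a multiple of $\pi\sqrt3$. This is analogous to the computation for Theorem 1, which I would reuse.

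The main obstacle is the middle step: finding a substitution that factors $1-\tfrac98x(1-x)^3y^3$, or the corresponding single-variable denominator, into linear or quadratic factors with nice roots. One must then verify that the many dilogarithm contributions cancel to leave only $\tfrac{\pi\sqrt3}{3}\log3+\tfrac{15}{4}K$. The special value $9/8$ should be exactly what makes the quartic degenerate, for instance by creating a double root, and I would first try to locate that degeneracy explicitly.
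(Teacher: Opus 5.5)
Your starting point is the right one: a $\log$-weighted Beta integral (your $x$ is the paper's $1-t$). But the proposal never evaluates anything. The decisive step is left as the ``main obstacle'', and the route you sketch for it rests on a wrong picture of the sum.

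Deforming every term uniformly through $B(k+s,3k)$ and keeping the rational weight $R(k)$ leaves factors $1/(3k-1)$, $1/(3k-2)$ after partial fractions. So the $k$-sum is not geometric, and you are forced into an extra variable $y$ and a denominator $1-\frac98x(1-x)^3y^3$ that has no reason to factor.

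Splitting off the $-25k+12$ series is also a structural error, because it is not a multiple of $\pi\sqrt3$. By the same Beta-integral method,
\[
\sum_{k\ge1}\frac{(12-25k)(9/8)^{k-1}}{k(3k-1)(3k-2)\binom{4k}{k}}=\frac65\int_0^1\frac{92t^2-41t-21}{(3t^2+3t+2)(3t^2-6t+4)}\,dt,
\]
which equals $\frac65\bigl(\frac{67\pi}{36\sqrt3}-\frac{53}{6}\log2+\frac{125}{12\sqrt{15}}\arctan\frac{\sqrt{15}}{7}\bigr)$. The $\log 2$ and $\sqrt{15}$ constants cancel only against the harmonic part. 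So your $F'(0)$ contains them too, contrary to the shape you predict.

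There is also a small slip: $\Gamma(k+s)\Gamma(3k+1)/\Gamma(4k+s)=3k\int_0^1x^{k-1+s}(1-x)^{3k-1}\,dx$. With exponent $3k$, as you wrote it, you would get $H_{4k}-H_{k-1}$ instead of $H_{4k-1}-H_{k-1}$.

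The missing idea is to vary the deformation from term to term, so that the $-25k+12$ part is absorbed.
\begin{itemize}
\item Write $P(X)=8(3X-1)(3X-2)-5X(X+1)+7X(4X-1)$. The three pieces are then exactly $\frac34B(3k,k)$, $\frac34B(3k-2,k+2)$ and $\frac34B(3k-2,k+1)$, with no rational factor left. After the Beta integral, each $k$-sum is geometric in $\frac98t^3(1-t)$.
\item Differentiate each in its second argument. This produces, up to sign, $H_{4k-1}-H_{k-1}$, $H_{4k-1}-H_{k+1}$ and $H_{4k-2}-H_k$.
\item The discrepancies $\frac1k+\frac1{k+1}$ and $\frac1{4k-1}+\frac1k$, weighted by $-5(k+1)$ and $7(4k-1)$, add up to $\frac{25k-12}{k}$ over $(3k-1)(3k-2)$. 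This cancels the $-25k+12$ term exactly.
\item The whole series becomes $-\frac23\int_0^1\frac{9(3t^2+3t+2)\log(1-t)}{9t^4-9t^3+8}\,dt$.
\end{itemize}
What is special about $9/8$ is not a double root. It is the rational factorization $9t^4-9t^3+8=(3t^2+3t+2)(3t^2-6t+4)$, which cancels the numerator and leaves $-6\int_0^1\frac{\log x}{1+3x^2}\,dx$. The substitution $\sqrt3\,x=\tan\theta$, together with $\mathrm{Cl}_2(\pi/3)=\frac32\mathrm{Cl}_2(2\pi/3)=\frac{3\sqrt3}{4}K$, then gives $\frac{\pi\sqrt3}{3}\log3+\frac{15}{4}K$.
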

This result proves Sun's conjecture \cite{series2}*{(4.19)}.

Our proofs primarily rely on the Beta function $B(x,y)$, defined for every $(x,y)\in\R^2$ by:
\[B(x,y):=\frac{\Gamma(x)\Gamma(y)}{\Gamma(x+y)}\]
where $\Gamma(x)=\int_0^{+\infty}t^{x-1}e^{-t}\,dt$ is the usual Gamma function. The Beta function also has an Euler integral representation given by:
\begin{equation}
B(x,y)=\int_0^1t^{x-1}(1-t)^{y-1}\,dt.
\label{beta_int}
\end{equation}
We use standard notation throughout.
%%%%%%%%%%%%%%%%%%%%%%%%%%%%%%%%%%%%%%%%%%%%%%%%%%%%%%%%%%%%%%%%%%%%%%%
\section{Preliminary results}
The proofs of the theorems in this note rely on a few intermediate results. The first lemma provides a decomposition for quadratic polynomials.
\begin{lemma}
    For every real polynomial $P$ of degree at most $2$, there exists a triple $(\alpha,\beta,\gamma)\in\R^3$ such that:
    \[P(X)=\alpha (3X-1)(3X-2)+\beta X(X+1)+\gamma X(4X-1)\]
\end{lemma}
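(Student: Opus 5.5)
The statement is a question of linear algebra: the space $\R_2[X]$ of real polynomials of degree at most $2$ has dimension $3$, so I will show that the three polynomials
\[Q_1(X)=(3X-1)(3X-2)=9X^2-9X+2,\quad Q_2(X)=X(X+1)=X^2+X,\quad Q_3(X)=X(4X-1)=4X^2-X\]
are linearly independent. They then form a basis of $\R_2[X]$, and every $P$ of degree at most $2$ is a (unique) combination $\alpha Q_1+\beta Q_2+\gamma Q_3$.

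To prove independence, I write the coordinates of the $Q_i$ in the basis $(1,X,X^2)$ as the columns of a $3\times 3$ matrix and compute its determinant:
\[\det\begin{pmatrix} 2 & 0 & 0\\ -9 & 1 & -1\\ 9 & 1 & 4\end{pmatrix}=2\,(1\cdot 4-(-1)\cdot 1)=10\neq 0.\]
A shorter route is to evaluate at $X=0$. This forces the constant term of $P$ to equal $2\alpha$, because $Q_2$ and $Q_3$ vanish at $0$. It remains to check that $Q_2$ and $Q_3$ span the polynomials $aX^2+bX$. Solving $\beta+4\gamma=a$ and $\beta-\gamma=b$ gives
\[\gamma=\frac{a-b}{5},\qquad \beta=\frac{a+4b}{5}.\]

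For an explicit construction, I write $P(X)=pX^2+qX+r$ and take
\[\alpha=\frac r2,\qquad \beta=\frac{(p-9\alpha)+4(q+9\alpha)}{5},\qquad \gamma=\frac{(p-9\alpha)-(q+9\alpha)}{5}.\]
Expanding $\alpha Q_1+\beta Q_2+\gamma Q_3$ and matching coefficients confirms these choices. The explicit formulas will also be convenient when the lemma is later applied to the numerators $13k^2-15k+2$ and $95k^2-84k+16$.

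There is no real obstacle. The only point needing care is checking that the determinant is nonzero, equivalently that the linear system for $\beta$ and $\gamma$ has determinant $-5\neq 0$.
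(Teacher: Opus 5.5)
Your proposal is correct and takes the same route as the paper: you show that $(3X-1)(3X-2)$, $X(X+1)$ and $X(4X-1)$ are linearly independent, and conclude that they form a basis of the $3$-dimensional space $\R_2[X]$; your determinant $10\neq 0$ makes explicit the check the paper calls straightforward. Your explicit formulas for $(\alpha,\beta,\gamma)$ are also correct, and they reproduce the triples $(1,-4,2)$ and $(8,-5,7)$ that the paper uses later.
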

Note that $\alpha$, $\beta$, and $\gamma$ can be readily determined by solving a system of linear equations. We will compute these values as needed. 
\begin{proof}
    Consider the vector space $\R_2[X]$ of polynomials of degree at most $2$. Define the polynomials:
    \[P_1=(3X-1)(3X-2),\quad P_2=X(X+1),\quad P_3=X(4X-1).\]
    It is straightforward to verify that $(a,b,c)=(0,0,0)$ is the unique solution to the equation:
    \[aP_1+bP_2+cP_3=0\]
    Therefore, $P_1$, $P_2$, and $P_3$ are linearly independent in $\R_2[X]$.
    
    Moreover, since $\dim\R_2[X]=3$, the set $(P_1,P_2,P_3)$ forms a basis for $\R_2[X]$. 

    Consequently, every polynomial $Q\in\R_2[X]$ can be uniquely written as a linear combination of $P_1$, $P_2$, and $P_3$.
\end{proof}
Our second lemma introduces a useful transformation used in the proof of Lemma 2.
\begin{lemma}
    For all integers $n,m\ge1$, 
    \begin{equation}
     \frac{H_{n+m-1}-H_{n-1}}{n\binom{n+m-1}{n}}=-\frac{\partial}{\partial x}B(x,y)\Bigg|_{\substack{x=n\\y=m}}.
    \label{lem2}
    \end{equation}
\end{lemma}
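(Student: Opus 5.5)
The plan is to differentiate $B(x,y)=\Gamma(x)\Gamma(y)/\Gamma(x+y)$ logarithmically in $x$. Writing $\psi=\Gamma'/\Gamma$ for the digamma function, we have
\[\frac{\partial}{\partial x}B(x,y)=B(x,y)\bigl(\psi(x)-\psi(x+y)\bigr),\]
valid for $x,y>0$ where everything is smooth. So the proof reduces to two evaluations at $x=n$, $y=m$:
\begin{enumerate}
\item the value of $B(n,m)$ in terms of a binomial coefficient;
\item the value of $\psi(n+m)-\psi(n)$ in terms of harmonic numbers.
\end{enumerate}

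For the first, use $\Gamma(k)=(k-1)!$ for positive integers $k$. This gives
\[B(n,m)=\frac{(n-1)!\,(m-1)!}{(n+m-1)!}.\]
Since $n\binom{n+m-1}{n}=n\cdot\frac{(n+m-1)!}{n!\,(m-1)!}=\frac{(n+m-1)!}{(n-1)!\,(m-1)!}$, we get $B(n,m)=1/\bigl(n\binom{n+m-1}{n}\bigr)$.

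For the second, the standard fact is $\psi(k)=-\gamma+H_{k-1}$ for integers $k\ge1$, where $\gamma$ is Euler's constant. It follows from the functional equation $\psi(x+1)=\psi(x)+1/x$, obtained by differentiating $\log\Gamma(x+1)=\log x+\log\Gamma(x)$, together with $\psi(1)=-\gamma$. Actually only the telescoping identity is needed:
\[\psi(n+m)-\psi(n)=\sum_{j=n}^{n+m-1}\frac1j=H_{n+m-1}-H_{n-1}.\]
Combining the two evaluations gives
\[-\partial_xB(n,m)=B(n,m)\bigl(\psi(n+m)-\psi(n)\bigr)=\frac{H_{n+m-1}-H_{n-1}}{n\binom{n+m-1}{n}},\]
which is \eqref{lem2}.

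If one prefers to stay within the paper's integral viewpoint, there is an alternative route. Differentiate \eqref{beta_int} under the integral sign to get $\partial_xB=\int_0^1 t^{x-1}(1-t)^{y-1}\log t\,dt$; differentiation is justified by dominated convergence for $x>0$. One would then evaluate this integral at integer points by expanding $(1-t)^{m-1}$ binomially, which yields a finite sum. However, identifying that sum with harmonic numbers requires a further combinatorial identity, so the digamma route is cleaner.

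No step is a real obstacle. The only care needed is to justify the functional equation of $\psi$ (equivalently, that $\Gamma$ is differentiable and positive on $(0,\infty)$) and to handle the binomial bookkeeping correctly.
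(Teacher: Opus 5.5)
Your proposal is correct and is essentially the paper's proof: both use the logarithmic derivative $\partial_x B(x,y)=B(x,y)\bigl(\psi(x)-\psi(x+y)\bigr)$, then evaluate $B(n,m)=1/\bigl(n\binom{n+m-1}{n}\bigr)$ and use $\psi(k)=H_{k-1}-\gamma$. Your derivation of the harmonic-number difference by telescoping $\psi(x+1)=\psi(x)+1/x$ is only a more self-contained justification of the step the paper simply quotes.
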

\begin{proof}
    Observe that
    \begin{align*}
        \frac{\partial}{\partial x}B(x,y)&=\frac{\partial}{\partial x}\frac{\Gamma(x)\Gamma(y)}{\Gamma(x+y)}\\&=\frac{\Gamma(x)\Gamma(y)}{\Gamma(x+y)}(\psi(x)-\psi(x+y))\\&=B(x,y)(\psi(x)-\psi(x+y))
    \end{align*}
    where $\psi(x)=\frac{d}{dx}\log{\Gamma(x)}$. Evaluating at $(x,y)=(n,m)$ for integers $n,m\ge1$, we obtain:
    \begin{align*}
        \frac{\partial}{\partial x}B(x,y)\Bigg|_{\substack{x=n\\y=m}}&=B(n,m)(\psi(n)-\psi(n+m))\\&=\frac{1}{n\binom{n+m-1}{n}}(H_{n-1}-H_{n+m-1})
    \end{align*}
    since $\psi(n)=H_{n-1}-\gamma$, where $\gamma$ is the Euler-Mascheroni constant. This proves the lemma.
\end{proof}
The third lemma will be useful at the end of the proof of Theorem 2.
\begin{lemma}
    We have the following identity:
    \begin{equation}
    \int_0^1\frac{\log(1-t)}{3t^2-6t+4}\,dt=-\frac{\pi\sqrt{3}}{18}\log3-\frac{5}{8}K.
    \label{lem3}
    \end{equation}
\end{lemma}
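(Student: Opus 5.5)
The plan is to reduce the integral, by two substitutions, to an integral of $\log\tan\theta$ over $[0,\pi/3]$, and then evaluate that with the Clausen function.

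\textbf{Step 1 (simplify the quadratic).} Put $u=1-t$. Since $3t^2-6t+4=3(1-t)^2+1$, the integral becomes
\[\int_0^1\frac{\log u}{3u^2+1}\,du .\]

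\textbf{Step 2 (trigonometric substitution).} Set $u=\tan\theta/\sqrt3$, so that $du=\sec^2\theta\,d\theta/\sqrt3$ and $3u^2+1=\sec^2\theta$. The limits $u=0,1$ become $\theta=0,\pi/3$. We obtain
\[\frac{1}{\sqrt3}\int_0^{\pi/3}\log\!\Big(\frac{\tan\theta}{\sqrt3}\Big)d\theta=-\frac{\pi}{6\sqrt3}\log3+\frac{1}{\sqrt3}\int_0^{\pi/3}\log\tan\theta\,d\theta .\]
The first term equals $-\frac{\pi\sqrt3}{18}\log3$, which is exactly the $\log 3$ part of \eqref{lem3}. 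Since $K=\frac{4\sqrt3}{9}\mathrm{Cl}_2(\pi/3)$, it remains to show
\[\int_0^{\pi/3}\log\tan\theta\,d\theta=-\frac{5\sqrt3}{8}K=-\frac56\,\mathrm{Cl}_2(\pi/3).\]

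\textbf{Step 3 (Clausen function).} Write $\log\tan\theta=\log(2\sin\theta)-\log(2\cos\theta)$. We use the standard formulas
\[\int_0^x\log(2\sin\theta)\,d\theta=-\tfrac12\mathrm{Cl}_2(2x),\qquad \int_0^x\log(2\cos\theta)\,d\theta=\tfrac12\mathrm{Cl}_2(\pi-2x).\]
Both follow by integrating the Fourier series $\log(2\sin\theta)=-\sum_{k\ge1}\cos(2k\theta)/k$ term by term, and similarly for $\log(2\cos\theta)$. Taking $x=\pi/3$ gives
\[\int_0^{\pi/3}\log\tan\theta\,d\theta=-\tfrac12\mathrm{Cl}_2(2\pi/3)-\tfrac12\mathrm{Cl}_2(\pi/3).\]

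\textbf{Step 4 (duplication).} The duplication formula $\mathrm{Cl}_2(2x)=2\mathrm{Cl}_2(x)-2\mathrm{Cl}_2(\pi-x)$ comes from splitting $\sum\sin(kx)/k^2$ by the parity of $k$. With $x=\pi/3$ it reads $\mathrm{Cl}_2(2\pi/3)=2\mathrm{Cl}_2(\pi/3)-2\mathrm{Cl}_2(2\pi/3)$, hence $\mathrm{Cl}_2(2\pi/3)=\tfrac23\mathrm{Cl}_2(\pi/3)$. Substituting into Step 3 gives $-\tfrac12\cdot\tfrac53\,\mathrm{Cl}_2(\pi/3)=-\tfrac56\,\mathrm{Cl}_2(\pi/3)$, as required.

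\textbf{Main obstacle.} The only delicate point is Step 3. The term-by-term integration of the Fourier series needs justification near the logarithmic singularity at $\theta=0$. This is handled by $L^2$ convergence of the series on the interval, or equivalently by citing the classical integral representation $\mathrm{Cl}_2(x)=-\int_0^x\log|2\sin(t/2)|\,dt$. The constants must also be tracked carefully against the stated normalization $K=\frac{4\sqrt3}{9}\mathrm{Cl}_2(\pi/3)$.
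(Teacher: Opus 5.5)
Your proof is correct and follows essentially the same route as the paper: the substitutions $t=1-x$ and $\sqrt3\,x=\tan\theta$, splitting $\log\tan\theta$ into sine and cosine integrals evaluated via the Clausen function, and the relation $\text{Cl}_2(2\pi/3)=\tfrac23\text{Cl}_2(\pi/3)$. The differences are cosmetic. You work with $\log(2\sin\theta)$ and $\log(2\cos\theta)$, so the $\log 2$ terms never appear, and you derive the duplication relation by parity splitting where the paper simply quotes it.
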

\begin{proof}
    Let 
    \[I=\int_0^1\frac{\log(1-t)}{3t^2-6t+4}\,dt.\]
    Make the change of variable $t=1-x$. Then,
    \[I=\int_0^1\frac{\log(x)}{3x^2+1}\,dx.\]
    Now, making the change of variable $x\sqrt3=\tan\theta$, we obtain:
    \begin{align}
        I&=\frac{\sqrt3}{3}\int_0^{\pi/3}\log\left(\frac{\tan\theta}{\sqrt3}\right)\,d\theta\nonumber\\
        &=-\frac{\pi\sqrt{3}}{18}\log3+\frac{\sqrt3}{3}\left(\int_0^{\pi/3}\log\left(\sin\theta\right)\,d\theta-\int_0^{\pi/3}\log\left(\cos\theta\right)\,d\theta\right)\nonumber\\
        &:=-\frac{\pi\sqrt{3}}{18}\log3+\frac{\sqrt3}3(J_1-J_2)
        \label{I=J1-J2}
    \end{align}
    where $J_1$ and $J_2$ are the respective integrals in the second line. Recall the integral definition of the Clausen function:
    \[\text{Cl}_2(z)=-\int_0^z\log\left|2\sin\frac{\theta}{2}\right|\,d\theta.\]
    Making the change of variable $\theta=u/2$ in the expression for $J_1$, it is straightforward to verify that:
    \begin{equation}
        J_1=-\frac12\text{Cl}_2(2\pi/3)-\frac\pi3\log2.
    \label{J1}
    \end{equation}
    Moreover, making the change of variable $u=\pi/2-\theta$ in the expression for $J_2$, we find:
    \[J_2=\int_{\pi/6}^{\pi/2}\log{(\sin u)}\,du.\]
    This can be evaluated in the same manner as $J_1$. Thus, after some simplification,
    \begin{equation}
    J_2=\frac12\text{Cl}_2(\pi/3)-\frac\pi3\log2.
    \label{J2}
    \end{equation}
    Since \[\text{Cl}_2(\pi/3)=\frac32\text{Cl}_2(2\pi/3)=\frac{3\sqrt3}{4}K,\]
    substituting \eqref{J1} and \eqref{J2} into \eqref{I=J1-J2}, we obtain:
    \begin{align*}
        I&=-\frac{\pi\sqrt{3}}{18}\log3-\frac{\sqrt3}3\cdot\frac{5\sqrt3}{8}K\\&=-\frac{\pi\sqrt{3}}{18}\log3-\frac{5}{8}K.
    \end{align*}
    This proves Lemma 3.
\end{proof}

%%%%%%%%%%%%%%%%%%%%%%%%%%%%%%%%%%%%%%%%%%%%%%%%%%%%%%%%%%%%%%%%%%
\section{Proof of Theorem 1}
We can now prove the first theorem of this article.
\begin{proof}[Proof of Theorem 1]
    Let
    \[S:=\sum_{k=1}^\infty \frac{(13k^2-15k+2)9^{k-1}}{k(3k-1)(3k-2)\binom{4k}{k}}.\]
    The triple $(\alpha,\beta,\gamma)$ associated with the polynomial $13X^2-15X+2$ in the decomposition of Lemma 1 is $(1,-4,2)$. Consequently,
    \[S=\sum_{k=1}^\infty \frac{9^{k-1}}{k\binom{4k}{k}}-4\sum_{k=1}^\infty \frac{(k+1)9^{k-1}}{(3k-1)(3k-2)\binom{4k}{k}}+2\sum_{k=1}^\infty \frac{(4k-1)9^{k-1}}{(3k-1)(3k-2)\binom{4k}{k}}.\]
    The terms of each sum can thus be expressed using the Beta function $B(x,y)$:
    \[12S=\sum_{k=1}^\infty B(3k,k)9^k-4\sum_{k=1}^\infty B(3k-2,k+2)9^k+2\sum_{k=1}^\infty B(3k-2,k+1)9^k.\]
    Using the integral representation of the Beta function, we obtain:
    \begin{multline*}
        12S=\int_0^1\sum_{k=1}^\infty t^{3k-1}(1-t)^{k-1}9^k\,dt-4\int_0^1\sum_{k=1}^\infty t^{3(k-1)}(1-t)^{k+1}9^k\,dt\\+2\int_0^1\sum_{k=1}^\infty t^{3(k-1)}(1-t)^k9^k\,dt.
    \end{multline*}
    Recognizing geometric series, we have:
    \begin{align}
        \frac43S&=\int_0^1\frac{t^2}{1-9t^3(1-t)}\,dt-4\int_0^1\frac{(1-t)^2}{1-9t^3(1-t)}\,dt+2\int_0^1\frac{1-t}{1-9t^3(1-t)}\,dt\nonumber
        \\&=\int_0^1\frac{-3t^2+6t-2}{9t^4-9t^3+1}\,dt.
        \label{S=Int}
    \end{align}
    Let $u$ and $v$ be two polynomials defined on $\R$ by:
    \[u(x)=3x^2-\frac32x-\frac12\quad\text{and}\quad v(x)=\frac{\sqrt3}{2}(x-1).\]
    One can verify that:
    \begin{equation}
    \frac{-3x^2+6x-2}{9x^4-9x^3+1}=-\frac{2\sqrt3}{3}\cdot\frac{d}{dx}\arctan{\frac{u(x)}{v(x)}}.
    \label{d/dx atan u/v}
    \end{equation}
    \[
    \left|
    \hspace{0.2cm}
    \begin{minipage}{\textwidth}
    \textbf{Remark.} We determined the polynomials $u$ and $v$ using \texttt{Mathematica}. Note that we need to find $u$ and $v$ such that $u'v-uv'=C(3X^2-6X+2)$ and $u^2+v^2=9X^4-9X^3+2$, where $C$ is a constant. Consequently, one of the two polynomials must have degree $2$ and the other degree $1$ (here, $u$ and $v$, respectively). Moreover, by coefficient comparison, we may set the coefficient of $X^2$ in the polynomial $u$ equal to $3$. This reduces the number of unknowns in the system.
    \end{minipage}
    \hspace{0.2cm}
    \right|
    \]\\
 
    Thus, by substituting \eqref{d/dx atan u/v} into \eqref{S=Int}, we obtain the desired result:
    \begin{align*}
        S&=\lim_{x\to1^-}-\frac34\times\frac{2\sqrt3}{3}\left(\arctan\frac{u(x)}{v(x)}-\arctan\frac{u(0)}{v(0)}\right)\\
        &=-\frac{\sqrt{3}}{2}\left(-\frac{\pi}2-\frac\pi6\right)\\
        &=\frac{\pi\sqrt3}3.
    \end{align*}
    This concludes the proof.
\end{proof}
%%%%%%%%%%%%%%%%%%%%%%%%%%%%%%%%%%%%%%%%%%%%%%%%%%%%%%%%%%%%%%%%%%%%%%%%%%%%%%%%%%%
\section{Proof of Theorem 2}
We can now prove our second theorem. This proof partially follows the arguments used in the proof of Theorem 1.
\begin{proof}[Proof of Theorem 2]
    Let $P(X)=95X^2-84X+16$ be a polynomial. We have
    \begin{equation}
    S:=\sum_{k=1}^\infty  \frac{P(k)(H_{4k-1}-H_{k-1})-25k+12}{k(3k-1)(3k-2)\binom{4k}{k}}\left(\frac98\right)^{k-1}:=\frac89(S_1+S_2)
    \label{S}
    \end{equation}
    where
    \[S_1:=\sum_{k=1}^\infty  \frac{P(k)(H_{4k-1}-H_{k-1})}{k(3k-1)(3k-2)\binom{4k}{k}}\left(\frac98\right)^k\]
    and
    \[S_2:=\sum_{k=1}^\infty  \frac{-25k+12}{k(3k-1)(3k-2)\binom{4k}{k}}\left(\frac98\right)^k.\]
    The triple $(\alpha,\beta,\gamma)$ associated with the polynomial $P$ in the decomposition of Lemma 1 is $(8,-5,7)$. Thus,
    \begin{multline*}
        S_1=8\sum_{k=1}^\infty \frac{H_{4k-1}-H_{k-1}}{k\binom{4k}{k}}\left(\frac98\right)^k-5\sum_{k=1}^\infty\frac{(k+1)(H_{4k-1}-H_{k+1})}{(3k-1)(3k-2)\binom{4k}{k}}\left(\frac98\right)^k\\-5\sum_{k=1}^\infty\frac{(k+1)\left(\frac1k+\frac1{k+1}\right)}{(3k-1)(3k-2)\binom{4k}{k}}\left(\frac98\right)^k+7\sum_{k=1}^\infty\frac{(4k-1)(H_{4k-2}-H_{k})}{(3k-1)(3k-2)\binom{4k}{k}}\left(\frac98\right)^k\\+7\sum_{k=1}^\infty\frac{(4k-1)\left(\frac1{4k-1}+\frac1k\right)}{(3k-1)(3k-2)\binom{4k}{k}}\left(\frac98\right)^k.
    \end{multline*}
    Set
    \[S_3:=\sum_{k=1}^\infty\frac{(k+1)\left(\frac1k+\frac1{k+1}\right)}{(3k-1)(3k-2)\binom{4k}{k}}\left(\frac98\right)^k\]
    and
    \[S_4:=\sum_{k=1}^\infty\frac{(4k-1)\left(\frac1{4k-1}+\frac1k\right)}{(3k-1)(3k-2)\binom{4k}{k}}\left(\frac98\right)^k.\]
    Note that
    \[5S_3-7S_4=\sum_{k=1}^\infty \frac{-25k+12}{k(3k-1)(3k-2)\binom{4k}{k}}\left(\frac98\right)^k=S_2.\]
    By the transformation \eqref{lem2} from Lemma 2, we have:
    \begin{multline*}
        -\frac43S_1=8\sum_{k=1}^\infty \left(\frac98\right)^k\frac{\partial}{\partial y}B(x,y)\Bigg|_{\substack{x=3k\\y=k}}-5\sum_{k=1}^\infty\left(\frac98\right)^k\frac{\partial}{\partial y}B(x,y)\Bigg|_{\substack{x=3k-2\\y=k+2}}\\+7\sum_{k=1}^\infty\left(\frac98\right)^k \frac{\partial}{\partial y}B(x,y)\Bigg|_{\substack{x=3k-2\\y=k+1}}+\frac{4}{3}S_2.
    \end{multline*}
    Differentiating the integral representation \eqref{beta_int} of the Beta function with respect to $y$, we obtain:
    \begin{align*}
        -\frac43(S_1+S_2)=&8\int_0^1\sum_{k=1}^\infty t^{3k-1}(1-t)^{k-1}\left(\frac98\right)^k\log{(1-t)}\,dt\\&-5\int_0^1\sum_{k=1}^\infty t^{3(k-1)}(1-t)^{k+1}\left(\frac98\right)^k\log(1-t)\,dt\\&+7\int_0^1\sum_{k=1}^\infty t^{3(k-1)}(1-t)^k\left(\frac98\right)^k\log(1-t)\,dt
        \\=&\frac98\Bigg(8\int_0^1\frac{t^2}{1-\frac98t^3(1-t)}\log(1-t)\,dt\\&-5\int_0^1\frac{(1-t)^2}{1-\frac98t^3(1-t)}\log(1-t)\,dt\\&+7\int_0^1\frac{1-t}{1-\frac98t^3(1-t)}\log(1-t)\,dt\Bigg)\\=&\int_0^1\frac{9(3t^2+3t+2)}{9t^4-9t^3+8}\log(1-t)\,dt.
    \end{align*}
    Notice that $9X^4-9X^3+8=(3X^2+3X+2)(3X^2-6X+4)$. Then,
    \[-\frac4{27}(S_1+S_2)=\int_0^1\frac{\log(1-t)}{3t^2-6t+4}\,dt.\]
    Thus, the identity \eqref{lem3} of Lemma 3 yields:
    \[S_1-S_2=\frac{3\pi\sqrt{3}}{8}\log3+\frac{135}{32}K.\]
    Hence,
    \begin{equation}
    S_1+S_2=\frac{3\pi\sqrt{3}}{8}\log3+\frac{135}{32}K.
    \label{S1}
    \end{equation}
    Then, by substituting \eqref{S1} into \eqref{S}, we arrive at the expected result:
    \begin{align*}
        S&=\frac89(S_1+S_2)\\
        &=\frac89\left(\frac{3\pi\sqrt{3}}{8}\log3+\frac{135}{32}K\right)\\
        &=\frac{\pi\sqrt3}3\log{3}+\frac{15}{4}K.
    \end{align*}
    This proves Theorem 2.
    \end{proof}
\section{Concluding remarks}
Our method also provides a new proof of the identities \eqref{Au} and \eqref{Sun}. Sun also conjectured the following formula. 
\begin{conj}[Sun \cite{series2}*{(4.25)}]
Let $P(X)=415X^2-343X+62$ and $Q(X)=581X^2-229X-6$. Then,
\[\sum_{k=1}^\infty \frac{P(k)-Q(k)/(4k)}{k(3k-1)(3k-2)(-8)^k\binom{4k}{k}}=-\frac{\pi^2}{4}.\]
\end{conj}
Apparently, our method does not allow us to prove this identity. Moreover, we cannot prove the similar formulas involving $H_{2k-1}$ (see \cite{series2}*{(4.21)-(4.24)}).
\bibliographystyle{amsplain}
\bibliography{bibly}
\end{document}